\documentclass{article}[11pt]
\usepackage{gastex}
\usepackage{tikz, hyperref}
\usepackage{amsmath,amscd}
\usepackage{amssymb}
\usepackage{upref}
\usepackage{multirow}
\usepackage{multicol}
\usepackage{url}
\usepackage[all]{xy}

\usepackage{color}
\usepackage{makeidx}
\makeindex
\usepackage{theorem}
\newtheorem{theorem}{Theorem}
\newtheorem{lemma}[theorem]{Lemma}
\newtheorem{proposition}[theorem]{Proposition}
\newtheorem{corollary}[theorem]{Corollary}
{\theorembodyfont{\rmfamily}%
  \newtheorem{example}[theorem]{Example}
   }
\newenvironment{proof}{\noindent\textit{Proof.}}
{\QED\vskip\theorempostskipamount} 
\newenvironment{proofof}[1]{\noindent\textit{Proof
    \protect{#1}.}}
                       {\QED\vskip\theorempostskipamount}
\def\petitcarre{\vrule height4pt width 4pt depth0pt}
\def\QED{\relax\ifmmode\eqno{\hbox{\petitcarre}}\else{%
  \unskip\nobreak\hfil\penalty50\hskip2em\hbox{}\nobreak\hfil
  \petitcarre
  \parfillskip=0pt \finalhyphendemerits=0\par\smallskip}
  \fi}
\newcommand\A{\mathcal{A}}

\newcommand\cL{\mathcal{L}}

\newcommand{\N}{\mathbb{N}}
\newcommand{\Z}{\mathbb{Z}}

\def\un(#1){\underline{#1}\,}

\DeclareMathOperator{\Card}{Card}

\definecolor{ivoire}{rgb}{0.99,0.99,0.8}

\definecolor{light-gray}{gray}{0.7}

\usepackage{calc}
\newcounter{hours}\newcounter{minutes}

\numberwithin{theorem}{section}
\numberwithin{equation}{section}
\numberwithin{figure}{section}
\numberwithin{table}{section}

\definecolor{lime}{HTML}{A6CE39}
\DeclareRobustCommand{\orcidicon}{%
	\begin{tikzpicture}
	\draw[lime, fill=lime] (0,0)
	circle [radius=0.16]
	node[white] {{\fontfamily{qag}\selectfont \tiny ID}};
	\draw[white, fill=white] (-0.0625,0.095)
	circle [radius=0.007];
	\end{tikzpicture}
	\hspace{-2mm}
}
\foreach \x in {A, ..., Z}{%
 \expandafter\xdef\csname orcid\x\endcsname{\noexpand%
 \href{https://orcid.org/\csname orcidauthor\x\endcsname}{\noexpand\orcidicon}}
}

\title{Sequential densities of rational languages}

\author{Alexi Block Gorman and Dominique Perrin}
\begin{document}

\maketitle

\begin{abstract}
We introduce the notion of  density of a rational language with respect to a sequence of probability measures. We prove that if $(\mu_n)$ is a sequence of Bernoulli measures converging to a positive Bernoulli measure $\overline{\mu}$, the sequential density is the ordinary density with respect to $\overline{\mu}$. We also prove that if $(\mu_n)$ is a sequence of invariant
probability measures converging in the strong sense to an invariant probability measure $\overline{\mu}$, then the sequential density of every rational language exists for this sequence.
\end{abstract}

\section{Introduction}
The density of rational languages with respect Bernoulli measures
has been widely studied (see \cite{BertheGouletOuelletNybergBroddaPerrinPetersen2024}
for references). 
It has been recently shown that it exists in the more general setting of invariant probability measures \cite{BertheGouletOuelletPerrin2025}.
In \cite{Lynch1993}, another generalization of the Bernoulli measure case was studied, namely the density of rational languages with respect to a sequence of Bernoulli measures.
This density is shown in \cite{Lynch1993} to always exist.

In this paper, we introduce the notion of sequential density of a rational language with respect to a sequence
of probability measures, thereby considering a setting that is more general than both
\cite{BertheGouletOuelletPerrin2025} and \cite{Lynch1993}.
We prove a result which describes the sequential density in the case of a sequence of Bernoulli measures
converging to a positive measure (Theorem~\ref{theoremDensityPositiveBernoulli}). 
This also provides a direct proof of Lynch's result in this setting.

We also prove the existence of the sequential density of every rational language,
with respect to a sequence $\mu$ of invariant probability measures converging strongly
to an invariant probability measure (Theorem~\ref{theoremMain}). This generalizes 
the main result of \cite{Lynch1993} in which only Bernoulli measures are considered.
The proof uses the main result of \cite{BertheGouletOuelletPerrin2025}.

The paper is organized as follows. In Section \ref{sectionPreliminaries},
we define the basic notions of symbolic dynamics and probability theory used in the paper.
In Section \ref{sectionSequential}, we define sequential densities.
We prove Theorem~\ref{theoremDensityPositiveBernoulli} in Section \ref{sectionDensityPositive}
and Theorem~\ref{theoremMain} in 
Section \ref{sectionStrong}. 
We give examples in Section~\ref{sectionWeak}
that demonstrate in which cases this density might not exist, or for which the density \emph{does} exist, but does not agree with the density of limiting measure to which the sequence converges. 
Finally, we consider in Section~\ref{sectionCombinatorial}
the notion of combinatorial density, which is a particular case of sequential density.

\section{Preliminaries}\label{sectionPreliminaries}

We provide a short introduction to key notions coming from symbolic dynamics
(see~\cite{LindMarcus1995} for more details).
\subsection{Shift spaces}
Let $A$ be finite set. We denote by $A^*$ the set of words on the alphabet $A$
and by $A^+$ the set of nonempty words.
A \emph{prefix code} is a subset  of $A^*$ which does
not contain a proper prefix of any of its elements.
Symmetrically, a \emph{suffix code} is a subset  of $A^*$ which does
not contain a proper suffix of any of its elements.

We consider the set $A^\Z$ of two-sided infinite sequences of elements of
$A$ as a topological space with the usual topology induced by the distance function
$d(x,y)=2^{-r(x,y)}$, where $r(x,y)=\min\{|n|\mid n\in\Z, x_n\ne y_n\}$.
For $x\in A^\Z$ and $i<j$, we denote $x_{[i,j)}	=x_i\cdots x_{j-1}$.
For $w\in A^*$, we denote $[w]=\{x\in A^\Z\mid x_{[0,|w|)}=w\}$
the cylinder with basis $w$.

A \emph{shift space} on $A$ is a closed and shift-invariant subset of $A^\Z$.
 Equivalently, $X$
is a shift space if there is a set $W$ of words on $A$, which we
call the set of \emph{forbidden blocks},
such that $X$ is the set of sequences $x$ without any occurrence of any word in $W$.

The language of $X$, denoted by $\cL(X)$, is the set of words $w$ 
which appear in some element of $X$. 
We denote by $\cL_k(X)$ the set of words of length $k$ in $\cL(X)$.
A \emph{shift of finite type} is a shift space defined by a finite set of forbidden blocks.
A shift space $X$  is called \emph{irreducible} if for every $u,w\in\cL(X)$
there exists $w$ such that $uwv\in\cL(X)$.

Let $X$ be a shift space and let $k\ge 1$ be an integer. Let $u\mapsto \langle u\rangle$
be a bijection from the set $\cL_k(X)$ onto an alphabet $A_k$. Let $\varphi_k\colon X\to A_k^\Z$ be
the map defined by $y=\varphi_k(x)$ if $y_n=x_{[n,n+k)}$ for every $n\in\Z$.
The image $\varphi_k(X)$ of $X$ under $\varphi_k$ is a shift space on $A_k$ called
the \emph{$k$-block presentation} of $X$.

\subsection{Rational languages}

A language on the alphabet $A$ is a subset of $A^*$.
A finite automaton $\A=(Q,i,T)$ on the alphabet $A$ is given by a map $(q,a)\mapsto q\cdot a$
from the finite set $Q\times A$ into $Q$. The map extends in a natural way to a right action of $A$
on $Q$. The language \emph{recognized} by the automaton $\A$ is the set 
\[L=\{w\in A^*\mid i\cdot w\in T\}.\]
A \emph{rational language} is a language which can be recognized by some finite automaton. 

A language $L$ is \emph{recognized} by a morphism $\varphi\colon A^*\to M$
from $A^*$ to a monoid $M$ if $L=\varphi^{-1}(N)$ for some $N\subset M$.
A language is recognizable if and only if it can be recognized by a morphism 
onto a finite monoid.

The language of a shift of finite type is a rational language.
As an equivalent definition, a language is rational if it can be obtained from
the finite subsets of $A^*$ by a finite number of unions, products (concatenations), and (kleene) star, where
the star of a language $L$ is \[L^*=\{w_1w_2\cdots w_n\mid w_i\in L, n\ge 0\}.\]
For example, the language $L=ab\{a,b\}^*$ of words on the alphabet $\{a,b\}$
beginning with $ab$ is rational.

A finite monoid $M$ is \emph{aperiodic} if there is an integer $n\ge 1$ such that
$m^{n}=m^{n+1}$ for every $m\in M$.
A rational language is \emph{aperiodic} if it can be recognized by a morphism into a finite aperiodic monoid.
For example, the language $(a^2)^*$ is rational, but one can show that it is not aperiodic.

Rational languages are definable in the monadic second order logic of the integers with addition,
and aperiodic languages in the corresponding first-order logic (see~\cite{book/Perrin2004}).

\subsection{Probability measures}
We introduce probability measures on shift spaces
(see \cite{BoylePetersen2011} for a more complete introduction).

A \emph{probability measure} on $A^\Z$ is a map $\mu$ assigning to every Borel subset $U \subseteq A^{\Z}$ a real number $\mu(U)$ which is \emph{$\sigma$-additive}
(that is, $\mu(\cup_{n\ge 0} U_n)=\sum_{n\ge 0} \mu(U_n)$ for every sequence $(U_n)$ of pairwise disjoint
Borel sets $U_n$) and $\mu(X)=1$.

It follows from $\sigma$-additivity that a measure is \emph{monotone}; that is,
$U\subset V$ implies $\mu(U)\le\mu(V)$ and that, if
$U_1\subset U_2\subset\ldots$, then $\mu(\cup_{i\ge 1} U_i)=\lim_{i\to\infty}\mu(U_i)$.
For $w\in A^*$, we denote $\mu(w)=\mu([w])$. 

For each $n\ge 0$, the map $\mu_n\colon A^n\to[0,1]$
defined by $\mu_n(w)=\mu(w)$ is a probability measure on $A^n$ called the $n$-th \emph{marginal distribution}
of $\mu$.
For a set $W\subset A^*$,
we denote by $\mu(W)$ the sum $\sum_{w\in W}\mu(w)$, which is a real number or $+\infty$.

A probability measure $\mu$ on $A^\Z$ is \emph{invariant} if $\mu(S^{-1}U)=\mu(U)$
for every Borel set $U$.
An invariant probability measure is \emph{ergodic} if for every Borel set $U$ invariant under the shift, one has $\mu(U)=0$ or $\mu(U)=1$. By the Birkhoff ergodic theorem, an invariant measure
$\mu$ is ergodic if and only if 
\begin{equation}
\lim_{n\to\infty}\frac{1}{n}\sum_{i=0}^{n-1}\mu(U\cap S^{-i}V)=\mu(U)\mu(V)\label{eqErgodic}
\end{equation}
for every pair $U,V$ of Borel sets.

Two distinct ergodic measures $\mu,\nu$
on $A^\Z$ are \emph{mutually singular}, that is, there
exists a Borel set $U$ such that
$\mu(U)=\nu(X\setminus U)=0$. Indeed,
let $f$ be a measurable function such
that $\int fd\mu\ne\int f d\nu$. Let
$U$ be the set of $x\in A^\Z$ such that
$\frac{1}{n}\sum_{i=0}^{n-1}f(x)$
converges to $\int fd\nu$. Then
$\mu(U)=\nu(X\setminus U)=0$.

An invariant probability measure $\mu$ is \emph{mixing} if \eqref{eqErgodic} holds without taking the average, that is, if
\begin{equation}
\lim_{n\to\infty}\mu(U\cap S^{-n}V)=\mu(U)\mu(V)\label{eqMixing}
\end{equation}
for every pair $U,V$ of Borel sets.

We consider the space of probability measures on $A^\Z$ as a topological
space with the \emph{weak-$*$ topology}, in which $\mu_n\to\mu$
if $\mu_n(U)\to\mu(U)$ for every Borel set $U$. By the Banach-Alaoglu theorem,
this space is compact. The limit of a sequence of invariant measures is invariant
and the limit of a sequence of ergodic measures is ergodic.

A sequence $(\mu_n)_{n\ge 1}$ of probability measures on $A^\Z$ \emph{converges in the
strong sense} to a measure $\overline{\mu}$ if for every $\varepsilon>0$ there is $N\ge 1$
such that for every Borel set $U$ and every $n\ge N$, one has
$|\mu_n(U)-\overline{\mu}(U)|\le\varepsilon$.

A Bernoulli measure is a probability measure $\mu$ on $A^\Z$
such that $\mu(uv)=\mu(u)\mu(v)$ for every $u,v\in A^*$.
A Bernoulli measure is ergodic. If $(\mu_n)_{n\ge 1}$
is a convergent sequence of Bernoulli measures, the limit
is a Bernoulli measure. Indeed, for every $u,v\in A^*$,
one has $\mu(uv)=\lim_{n\to\infty}\mu_n(uv)=\lim_{n\to\infty}\mu_n(u)\mu_n(v)=\mu(u)\mu(v)$.

A \emph{Markov measure} is a measure defined by a stochastic $A\times A$-matrix $P$ 
called its {transition matrix} and a vector $\pi$, called its \emph{initial vector},
by 
\[\mu(a_0\cdots a_{n-1})=\pi_{a_0}P_{a_0a_1}\cdots P_{a_{n-2}a_{n-1}}\]
for $a_0,\ldots,a_{n-1}\in A$.
If $\pi P=\pi$, the measure is invariant. An invariant  Markov measure defined by
an irreducible matrix $P$ is ergodic.

A \emph{$k$-step Markov measure} on $A^\Z$ is a measure $\mu$ such that $\mu(U)=\nu(\varphi_k(U))$
for every Borel set $U$, and
for some Markov measure $\nu$ on the $k$-block presentation $\varphi_k(A^\Z)$.

The \emph{support} of a probability measure $\mu$ on $A^\Z$ is the set of words $w$
such that $\mu(w)>0$.
For every irreducible shift of finite type,
there is an ergodic measure supported by $X$, called
its measure of maximal entropy. 

Let $M$ be an irreducible matrix with left eigenvector $v$ and right eigenvector $w$
for the dominant eigenvalue $\lambda$, with the normalization $v\cdot w=1$.
The \emph{measure of maximal entropy} $\mu$ 
on the shift of finite type defined by  $M$ is the Markov measure with initial vector $\pi_i=v_iw_i$
and transition matrix $P_{ij}=(w_j/\lambda w_i)M_{i,j}$ (see \cite[chapter 13]{LindMarcus1995}). 

\begin{example}\label{exampleMeasureMaxEntropy}
Let $X$ be the shift of finite type defined by the matrix 
\[M=\begin{bmatrix}1&0&1\\1&1&1\\1&1&1\end{bmatrix}.\]
Its dominant eigenvalue is $\lambda=(3+\sqrt{5})/2$.
The corresponding left and right eigenvectors $v$ and $w$, normalized by $v\cdot w=1$, are
\[v=\frac{1}{2\lambda-3}\begin{bmatrix}1&\lambda-2&1\end{bmatrix},\quad w=\begin{bmatrix}\lambda-2\\1\\1\end{bmatrix}.\]

Thus, $\mu_2$ is the Markov measure defined by
\[\pi=\frac{1}{2\lambda-3}\begin{bmatrix}\lambda-2&\lambda-2&1\end{bmatrix},\quad
P=\begin{bmatrix}\frac{1}{\lambda}&0&\frac{1}{\lambda-1}\\\frac{\lambda-2}{\lambda}&\frac{1}{\lambda}&\frac{1}{\lambda}\\
\frac{\lambda-2}{\lambda}&\frac{1}{\lambda}&\frac{1}{\lambda}\end{bmatrix}.\]
\end{example}
%%%%%%%%%%%%%%%%%%%%%%

\section{Sequential densities}\label{sectionSequential}

Let $\mu=(\mu_n)_{n\ge 0}$ be a sequence of probability measures
on $A^\Z$. Given a language $L$ on the alphabet $A$, the \emph{sequential density}
of $L$ with respect to $\mu$ is
\[\delta_\mu(L)=\lim_{n\to\infty}\frac{1}{n}\sum_{i=0}^{n-1}\mu_i(L\cap A^i)\]
whenever the limit exists. We say that $L$ has a density in the strong
sense if the limit $\lim_{n\to\infty}\mu_n(L\cap A^n)$ exists, and not only
in average.
When the sequence $(\mu_n)$ is constant, we meet the usual notion of density (see \cite{BertheGouletOuelletNybergBroddaPerrinPetersen2024}).

Let us list a few elementary properties of sequential densities, which are actually the same for ordinary densities.
If $L$ has a sequential density with respect to $\mu$, then $0\le \delta_\mu(L)\le 1$.

The sequential density if \emph{finitely additive}; that is, if $L,L'$ are disjoint languages
having a sequential density (resp. in the strong sense), then
$L\cup L'$ has a sequential density (resp. in the strong sense).

The sequential density is \emph{monotone}, that is, if $L$ and $L'$ have sequential densities (resp. in the
strong sense) and $L\subset L'$, then $\delta_\mu(L)\le\delta_\mu(L')$.

The following result  is from \cite{Lynch1993}.
\begin{theorem}\label{theoremLynch}
If  $\mu$ is a  convergent sequence of Bernoulli measures on $A^\Z$,
the sequential density of every rational language exists. If $L$ is aperiodic,
the density exists in the strong sense.
\end{theorem}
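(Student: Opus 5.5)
The plan is to reduce everything to the behaviour of a finite monoid recognizing $L$ and to study the distribution of $\varphi(w)$ for a word $w$ of length $n$ drawn according to $\mu_n$. Let $\varphi\colon A^*\to M$ be a morphism onto a finite monoid with $L=\varphi^{-1}(N)$. Since $\mu_n$ is Bernoulli with letter probabilities $p_n(a)=\mu_n(a)$, the push-forward of $\mu_n$ restricted to $A^n$ under $\varphi$ is the $n$-fold convolution power $\nu_n^{*n}$ in $M$, where $\nu_n=\sum_{a}p_n(a)\delta_{\varphi(a)}$. Equivalently, $\mu_n(L\cap A^n)=\langle \delta_1 P_n^{\,n},\mathbf 1_N\rangle$, where $P_n$ is the stochastic matrix on $M$ (or on the state set $Q$ of an automaton) with $P_n(m,m\varphi(a))=p_n(a)$. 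The hypothesis gives $P_n\to\overline P$, the matrix associated with the limit Bernoulli measure $\overline\mu$. So the statement becomes: for a convergent sequence of stochastic matrices $P_n\to\overline P$ arising this way, $\frac1n\sum_{i<n} e P_i^{\,i}\mathbf 1_N$ converges, and $eP_n^{\,n}\mathbf 1_N$ converges when $M$ is aperiodic.

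The first step handles the letters whose limiting probability is positive. Let $B=\{a\in A\mid \overline\mu(a)>0\}$. Letters outside $B$ have probabilities $p_n(a)\to0$, but $np_n(a)$ need not tend to $0$, so they cannot simply be discarded; this is exactly what makes the problem differ from the single-measure case. I would condition on the (random) positions and identities of the rare letters. For a fixed pattern, a length-$n$ word is $u_0c_1u_1\cdots c_ku_k$ with $u_j\in B^*$ and $c_j\in A\setminus B$, and the $u_j$'s are i.i.d.-letter words under the renormalized measure on $B$, which converges to a positive Bernoulli measure on $B^*$. For a positive Bernoulli measure on $B$, the walk generated by $\varphi(B)$ in $M$ is eventually absorbed in the minimal ideal of the submonoid generated by $\varphi(B)$. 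Inside that ideal, the distribution of $\varphi(u)$ for long $u$ converges in Ces\`aro mean, and converges outright if the relevant group (the $\mathcal H$-class of the minimal ideal) is trivial, which holds in the aperiodic case. Because $P_n\to\overline P$, a uniform version of this (geometric absorption rate and spectral gap stable under small perturbation of the positive probabilities) should apply to all the $u_j$ of length $\to\infty$ simultaneously.

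The second step reassembles the pieces. The number $k$ of rare letters in a word of length $n$ is approximately Poisson-distributed with parameters governed by $\lambda_n(a)=np_n(a)$. Passing to subsequences along which $\lambda_n(a)$ converges in $[0,\infty]$, the product of the absorbed distributions with the rare letters interleaved converges. When some $\lambda(a)=\infty$, the rare letters occur often, and I expect that a further absorption into the minimal ideal of the monoid generated by $\varphi(A)$ applies. The result has to be independent of the subsequence, at least in Ces\`aro mean, to give a genuine limit. In the aperiodic case the limit should exist outright because no cyclic group oscillation appears.

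The main obstacle is this uniformity and the independence of subsequence limits. Limits along different subsequences with different $\lambda$ need not agree, since Theorem~\ref{theoremLynch} only claims existence and $\lambda_n$ may oscillate. So I would first try to show that the contribution depends continuously on the rare-letter intensities and that Ces\`aro averaging over $i<n$ smooths the dependence, because $ip_i(a)$ varies slowly in $i$ relative to $n$. If this fails, I would fall back on Lynch's original logical approach: express $L$ in monadic second-order logic and use an Ehrenfeucht–Fra\"iss\'e type argument to show that the truth value stabilizes with probability tending to $0$ or $1$ on each type.
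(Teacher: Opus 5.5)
There is a genuine gap, and it is exactly at the step you flagged. Nothing in the hypothesis makes $i\mu_i(a)$ vary slowly in $i$. Convergence of $(\mu_n)$ only gives $\mu_n(a)\to\overline\mu(a)$, and when $\overline\mu(a)=0$ the product $n\mu_n(a)$ is unconstrained. In fact the statement as formulated is false, so neither your spectral route nor the Ehrenfeucht--Fra\"{\i}ss\'e fallback can close it.

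Take $A=\{a,b\}$, $L=A^*aA^*$ (which is aperiodic), and $\mu_n(a)=c_n/n$, where $c_n=1$ if $\lfloor\log_2 n\rfloor$ is even and $c_n=2$ otherwise. Then $\mu_n\to\overline\mu$ with $\overline\mu(b)=1$, but $\mu_n(L\cap A^n)=1-(1-c_n/n)^n=1-e^{-c_n}+o(1)$. In the Ces\`aro mean up to $2^J$, the dyadic block $[2^{J-1},2^J)$ carries about half the weight, the block before it a quarter, and so on. Writing $\alpha=1-e^{-1}$ and $\beta=1-e^{-2}$, the means therefore approach $\frac23\alpha+\frac13\beta$ along odd $J$ and $\frac13\alpha+\frac23\beta$ along even $J$. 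Since $\alpha\ne\beta$, no sequential density exists, let alone one in the strong sense.

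The paper does not prove this statement itself. It quotes Lynch, whose theorem carries hypotheses on the asymptotics of $\mu_n(a)$, and asserts that the three regimes of Example~\ref{exampleLynch} exhaust all possibilities; the example above refutes that. Its only own argument is Theorem~\ref{theoremDensityPositiveBernoulli}, for a positive limit. That is essentially your Step 1 with $B=A$: $P(\mu_n)\to P(\overline\mu)$ with a spectral gap that survives perturbation, giving $\delta_\mu(L)=\delta_{\overline\mu}(L)$.

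Even after adding a regularity hypothesis, your reassembly step fails where you expect absorption into the minimal ideal of $\varphi(A^*)$ once $n\mu_n(a)\to\infty$. Take $\mu_n(a)=n^{-1/2}$ and $L=A^*aaA^*$. Then $n\mu_n(a)\to\infty$, yet the number of occurrences of $aa$ is asymptotically Poisson with mean $(n-1)\mu_n(a)^2\to1$, since occurrences of $aaa$ have expected number tending to $0$. Hence $\mu_n(L\cap A^n)\to 1-e^{-1}$, whereas absorption into the minimal ideal $\{\varphi(aa)\}$ would give $1$. Isolated rare letters separated by $B$-blocks need not push the product into the minimal ideal; here $\varphi(b)\varphi(a)\varphi(b)=\varphi(b)$.

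What actually matters is the expected number $n\,\mu_n(c_1)\cdots\mu_n(c_k)$ of clusters of $k$ rare letters at bounded distance. So there are thresholds at every scale $n^{-1/k}$, not only at $n^{-1}$. A correct statement must require each of these quantities to converge in $[0,\infty]$. Your conditioning argument would then have to become a Poisson approximation for such clusters, with only the long $B$-blocks between clusters treated by absorption into the minimal ideal of $\varphi(B^*)$.
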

This result from \cite{Lynch1993} is stated
for $A=\{a,b\}$. 
This is not really a restriction
since every alphabet can be encoded with two letters.
Moreover, it is assumed that $\mu_n(a)$
does not converge to $1$. 
Exchanging
$a$ and $b$ allows us to always satisfy this hypothesis.
Finally, various hypotheses
are stipulated concerning the convergence
of the sequence $\mu_n(a)$. The various
cases (which appear in the example below)
cover all possibilities, and therefore
there is no need to mention these
cases in the statement.

We first give an example illustrating Lynch's theorem. For two positive real functions $f$ and $g$,
we write $f\ll g$ if  for every $\epsilon>0$
there is $n_0 \in \N$ such that $f(n)\le\epsilon g(n)$ for every $n \in \N_{\geq n_0}$.
We write $f\sim g$ if $f(n)/g(n)$ converges to $1$.
\begin{example}\label{exampleLynch}
Let $A=\{a,b\}$ and $L=A^*aA^*$. We have
\begin{equation}\label{eqExampleLynch}
\delta_\mu(L)=\begin{cases}0&\mbox{if $\mu_n(a)\ll n^{-1}$,}\\
                           1-e^{-c}&\mbox{if $\mu_n(a)\sim cn^{-1}$,}\\
                           1&\mbox{if $n^{-1}\ll \mu_n(a)$.}
                           \end{cases}
\end{equation}
We shall obtain these values below in Example~\ref{exampleIdeal}. Note that,
in the second case, the sequential density
exists, but fails to be equal to $\delta_{\overline{\mu}}$. 
 Indeed, in this case, $\overline{\mu}(a)=0$ and therefore $\delta_\mu(L)\ne\delta_{\overline{\mu}}(L)=0.$
\end{example}
The following example illustrates the case
where the limit exists only in average.
\begin{example}
Let $L=(A^2)^*$ be the set of words of even length. Then,
\[\mu_i(L\cap A^i)=\begin{cases}1&\mbox{if $i$ is even,}\\0&\mbox{otherwise.}\end{cases}\]
Therefore, the sequence $\mu_i(L\cap A^i )$
does not converge. One has of course
$\delta_\mu(L)=1/2$ for every sequence
of probability measures $\mu$.
\end{example}

%%%%%%%%%%%%%%%%%%%%
\section{A formula for sequential densities}\label{sectionDensityPositive}
The following result covers Theorem \ref{theoremLynch}
when $\mu$ converges to a positive measure.
It also proves a stronger property, since we prove that the sequential
density not only exists, but is equal to the density with respect to the limit measure $\overline{\mu}$ (we have seen in Example~\ref{exampleLynch} that this can fail
to hold if $\overline{\mu}$ is not positive). 
We give a direct proof.
\begin{theorem}\label{theoremDensityPositiveBernoulli}
Let $\mu=(\mu_n)_{n\ge 0}$ be a sequence of Bernoulli measures on $A^\Z$. If $\mu$ converges
to a positive Bernoulli measure $\overline{\mu}$, then for every rational language $L\subset A^*$, the sequential
density of $L$ is equal to its density with respect to $\overline{\mu}$.
The same holds for densities in the strong sense for an aperiodic language $L$.
\end{theorem}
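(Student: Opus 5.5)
Let $\A=(Q,i,T)$ be a finite automaton recognizing $L$. I would work with the $Q\times Q$ matrices of a Bernoulli measure: for a Bernoulli measure $\nu$, set $P_\nu=\sum_{a\in A}\nu(a)M_a$, where $M_a$ is the $0/1$ matrix of the action of $a$ on $Q$. Then $P_\nu$ is stochastic, and since $\nu$ is Bernoulli,
\[\nu(L\cap A^n)=e_i P_\nu^n \mathbf 1_T .\]
Write $P_k=P_{\mu_k}$ and $\overline P=P_{\overline\mu}$. Convergence of $\mu_k$ to $\overline\mu$ gives $\mu_k(a)\to\overline\mu(a)$ for each letter, so $\varepsilon_k=\|P_k-\overline P\|\to0$. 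The key point of positivity is that $\overline\mu(a)>0$ for every $a$. Hence $P_k$ has the same support, namely the graph of the automaton, as $\overline P$ for all large $k$. More quantitatively, $P_k\ge c\,\overline P$ entrywise with $c\to 1$.

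The quantity to control is $\mu_k(L\cap A^k)=e_iP_k^k\mathbf 1_T$, in which the measure and the exponent vary together. A naive telescoping bound $\|P_k^k-\overline P^k\|\le k\varepsilon_k$ is useless unless $\varepsilon_k=o(1/k)$. So the plan is to use the finite Markov chain structure instead.

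First, I would reduce to a recurrent class. Decompose $Q$, as reachable from $i$, into transient states and closed (recurrent) classes of the graph, which is common to $\overline P$ and to all $P_k$ with $k$ large. Under $P_k$, the probability of still being in a transient state after $m$ steps is at most $C\rho^m$, uniformly in large $k$, because the escape probabilities are bounded below by those of $\overline P$ times $c$. Likewise, the distribution of the absorbing class and of the entry state converges to the one under $\overline P$ as $k\to\infty$; this is a finite computation with continuous dependence on the entries.

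Second, on each closed class $C$ with period $d$, irreducible stochastic matrices in a compact neighbourhood of $\overline P|_C$ with the same support have a uniform spectral gap on each cyclic subclass. Concretely, I would use a Doeblin-type argument: $P_k^{d m_0}$ restricted to a cyclic class has all entries at least some $\eta>0$, for a fixed $m_0$ and all large $k$. This gives $\|P_k^{n}-\Pi_k^{(n \bmod d)}\|\le C\theta^n$ uniformly in $k$, with $\theta<1$. Here $\Pi_k^{(r)}$ is the periodic limit, built from the stationary vector $\pi_k$ of $P_k|_C$. Since stationary vectors depend continuously on the matrix, $\pi_k\to\overline\pi$.

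Combining these, $\mu_k(L\cap A^k)-\overline\mu(L\cap A^{k'})\to 0$ along any $k'\equiv k \pmod d$ with $k'$ large. Therefore the Cesàro averages of $\mu_k(L\cap A^k)$ and of $\overline\mu(L\cap A^k)$ have the same limit. The latter is the ordinary density $\delta_{\overline\mu}(L)$, which exists by the same periodic convergence of $\overline P^n$. In the aperiodic case I would take a minimal automaton whose transition monoid is aperiodic. Then every cyclic subclass of every closed class has period $1$, since a period $d>1$ would give a word $u$ permuting a cycle of states nontrivially, contradicting $u^n=u^{n+1}$. So the convergence holds without averaging, giving density in the strong sense.

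The main obstacle is the uniformity in the second step: getting mixing rates for $P_k$ that do not depend on $k$. I would handle it with the entrywise lower bound $P_k\ge c\overline P$, $c>1/2$, which transfers a Doeblin minorization for $\overline P^{dm_0}$ to $P_k^{dm_0}$ with constant $c^{dm_0}\eta$. This is exactly where positivity of $\overline\mu$ is used, and Example~\ref{exampleLynch} shows it cannot be dropped.
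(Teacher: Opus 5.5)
Your argument is correct, and it takes a different and more robust route than the paper. The paper asserts that $P(\overline{\mu})$ is irreducible because $\overline{\mu}$ is positive, then reduces to the primitive case. It writes $P_n$ in Jordan form and uses a uniform bound $|\lambda_{n,k}|<1-\epsilon$ on the non-unit eigenvalues to get $P_n^n-vw_n\to0$ with $w_n\to w$. You instead split the automaton graph, which is the common support of $P_k$ and $\overline P$ for large $k$, into transient states and closed classes. You control the transient part by a uniform geometric escape bound, and you get uniform mixing on each cyclic subclass by transferring a Doeblin minorization through $P_k\ge c\,\overline P$.

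This handles several things the paper's proof glosses over:
\begin{itemize}
\item Irreducibility fails in general: the matrix in Example~\ref{exampleIdeal} is triangular even when $\overline{\mu}(a)>0$.
\item Periodic closed classes are treated, rather than removed by ``we may assume $p=1$''.
\item Uniformity in $k$ comes directly from the minorization, instead of depending on a change-of-basis matrix $U$ that varies with $n$ and need not stay bounded.
\item The strong-sense claim gets an actual proof: if a closed class had period $d>1$, then $q\cdot a^n$ and $q\cdot a^{n+1}$ would lie in different cyclic subclasses, so $a^n\ne a^{n+1}$ in the transition monoid.
\end{itemize}
The spectral route is shorter in the primitive case. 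It can be made rigorous there via the identity $P_n^n-vw_n=(P_n-vw_n)^n$ together with an operator norm in which $\|P(\overline{\mu})-vw\|<1$; this norm bound then persists for $P_n-vw_n$ when $n$ is large.

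Two small points to tighten in your write-up. First, in a periodic class the phase at time $k$ depends on the entry time. So you need convergence of the joint law of the entry state and the entry time modulo the period, not just of the entry state; this also follows from your uniform geometric tail on the entry time. Second, with several closed classes, $d$ should be the least common multiple of their periods. Alternatively, simply take $k'=k$, which directly gives $\mu_k(L\cap A^k)-\overline{\mu}(L\cap A^k)\to0$.
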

\begin{proof}
Let $\A=(Q,i,T)$ be a deterministic automaton recognizing $L$. 
Since $L$ is a finite union of languages
recognized by $(Q,i,\{t\})$ for $t\in T$, we may assume that $T=\{t\}$. 
Without loss of generality,
we may also assume that every $q\in Q$ lies on a path from $i$ to $t$. For a Bernoulli measure $\nu$ on $A^\Z$,
let $P(\nu)$ be the stochastic $Q\times Q$-matrix
defined by 
\begin{equation}\label{eqMarkovChain}
P(\nu)_{p,q}=\sum_{a\in A,p\cdot a=q}\nu(a).\end{equation}

Since $\overline{\mu}$ is positive, the matrix $P(\overline{\mu})$ is irreducible.
Since $\mu$ converges to $\overline{\mu}$, we may assume that all matrices $P_n=P(\mu_n)$ have
period $p$ equal to the period of $P(\overline{\mu})$. We may assume $p=1$ and therefore
that each $P_n$ is primitive. 

Let $\lambda_{n,k}$, $1\le k\le d$ be the distinct eigenvalues of $P_n$ and let $\gamma_{n,k}$ be the multiplicity
of $\lambda_{n,k}$ in the minimal polynomial of $P_n$. We may assume that $\lambda_{n,1}=1$. Since $P_n$
is primitive, we have $\gamma_{n,1}=1$.

Using the Jordan normal form of $P_n$, we may write
\[P_n=U\begin{bmatrix}1&0&\cdots&0\\0&J_{n,2}&\cdots&0\\0&0&\ddots\\0&0&\cdots&J_{n,d}\end{bmatrix}U^{-1}\]
where for $2\le k\le d$, $J_{n,k}$ is the Jordan block of $P_n$ corresponding to $\lambda_{n,k}$.
The columns of the matrix $U$ are generalized eigenvectors of $P_n$.
In particular, the first column is the vector $v$ with all components equal to $1$
and its first row is the stochastic vector $w_n$ such that $w_nP_n=w_n$.
Since $P_n$ converges to $P(\overline{\mu})$, the vectors $w_n$ tend to the
stochastic vector $w$ such that $wP(\overline{\mu})=w$.

 Since $P(\overline{\mu})$
is primitive, there is $\epsilon>0$ such that all eigenvalues $\lambda_{n,k}$ for $k\ge 2$ satisfy $|\lambda_{n,k}|<1-\epsilon$. 
So each $J_{n,k}^n$ tends to $0$, and therefore,
\[\lim_{n\to\infty}P_n^{n}=\lim_{n\to\infty}vw_n=\lim P(\overline{\mu})^n.\]
 This implies that $\delta_\mu(L)=\delta_{\overline{\mu}}(L)$
for every rational language $L$.
\end{proof}

\begin{example}\label{exampleIdeal}
Let $A=\{a,b\}$ and $L=A^*aA^*$. The automaton $\A$ is represented in Figure~\ref{figureAutomatonA}.
\begin{figure}[hbt]
\centering
\tikzset{node/.style={circle,draw,minimum size=.6cm,inner sep=0pt}}
    \tikzset{title/.style={minimum width=.4cm,minimum height=.4cm}}
    \tikzstyle{every loop}=[->,shorten >=1pt,looseness=6]
    \tikzstyle{loop left}=[in=130,out=220,loop]
    \tikzstyle{loop right}=[in=-45,out=45,loop]
    \tikzstyle{loop below}=[in=-45,out=-135,loop]
    \tikzstyle{loop above}=[in=135,out=45,loop]
\begin{tikzpicture}
\node[node](1)at(0,0){$1$};\node[node](2)at(2,0){$2$};

\draw[->,loop above,above](1)edge node{$b$}(1);
\draw[->,above](1)edge node{$a$}(2);
\draw[->,above,loop above](2)edge node{$a,b$}(2);
\draw[->](-.5,0)-- node{}(1);\draw[->](2)-- node{}(2.5,0);
\end{tikzpicture}
\caption{The automaton $\A$.}\label{figureAutomatonA}
\end{figure}
The matrix $P_n$ is
\[P_n=\begin{bmatrix}q_n&p_n\\0&1\end{bmatrix}\]
and we have
\[P_n^n=\begin{bmatrix}1&1\\1&0\end{bmatrix}\begin{bmatrix}1&0\\0&q_n\end{bmatrix}^n\begin{bmatrix}0&1\\1&-1\end{bmatrix}
=\begin{bmatrix}0&1\\0&1\end{bmatrix}+q_n^n\begin{bmatrix}1&-1\\0&0\end{bmatrix}.\]
So $\delta_\mu(L)=1-\lim_{n\to\infty}q_n^n$.

This gives the values of $\delta_\mu(L)$ indicated in Equation~\eqref{eqExampleLynch}.

\end{example}

The following result is due, for a single Bernoulli measure, to Sch\"utzenberger \cite{Schutzenberger1965}
(see also ~\cite[Theorem 13.4.5]{BerstelPerrinReutenauer2009}).

The statement uses elementary notions for finite monoids (see~\cite{BerstelPerrinReutenauer2009}). In particular, a finite monoid has a unique minimal ideal, which is the union of the minimal right ideals of $M$, and also the union of the minimal left ideals. For every minimal right ideal $R$ and every minimal left ideal $L$, the intersection $R\cap L$ is a finite group. 
These groups are all isomorphic.
If $M$ is aperiodic, then $G$ is trivial.

\begin{corollary}\label{corollaryEquidistribution}
Let $\mu=(\mu_n)_{n\ge 0}$ be a sequence of Bernoulli measures on $A^\Z$ converging to a positive
Bernoulli measure $\overline{\mu}$. Let $\varphi\colon A^*\to M$
be a morphism onto a finite monoid. Let $K$ be the minimal ideal of $M$. Set $\nu=\delta_\mu\circ\varphi^{-1}$.
Then, for every $m\in M$, one has the following.
\begin{enumerate}
\item $\nu(m)>0$ if and only if $m\in K$.
\item For every $m\in K$, one has
\begin{equation}
\nu(m)=\frac{\nu(mM)\nu(Mm)}{d},
\end{equation}
where $d=\Card(mM \cap Mm)$.
\end{enumerate}
If $M$ is aperiodic, then $d=1$ and $\delta_\mu(L)$ exists in the strong sense.
\end{corollary}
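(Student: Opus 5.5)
The plan is to reduce the statement to Schützenberger's theorem for the single positive Bernoulli measure $\overline{\mu}$, using Theorem~\ref{theoremDensityPositiveBernoulli}. For $m\in M$, the language $\varphi^{-1}(m)$ is rational, since it is recognized by $\varphi$. By Theorem~\ref{theoremDensityPositiveBernoulli} its sequential density exists and satisfies
\[\nu(m)=\delta_\mu(\varphi^{-1}(m))=\delta_{\overline{\mu}}(\varphi^{-1}(m)).\]
So $\nu$ coincides with the distribution $\delta_{\overline{\mu}}\circ\varphi^{-1}$ attached to the fixed measure $\overline{\mu}$. The same substitution applies to the sets $mM$ and $Mm$, because $\varphi^{-1}(mM)$ and $\varphi^{-1}(Mm)$ are finite unions of languages $\varphi^{-1}(m')$, and densities are finitely additive. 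Hence $\nu(mM)$ and $\nu(Mm)$ are also the $\overline{\mu}$-densities.

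With this identification, items 1 and 2 are exactly Schützenberger's theorem for the positive Bernoulli measure $\overline{\mu}$, as given in \cite[Theorem 13.4.5]{BerstelPerrinReutenauer2009}. That theorem states that $\delta_{\overline{\mu}}\circ\varphi^{-1}$ is supported exactly by the minimal ideal $K$. It also states that for $m\in K$ this mass is $\nu(mM)\nu(Mm)/d$, where $d$ is the cardinality of the group $mM\cap Mm$. These groups are the $\mathcal{H}$-classes $R\cap L$ of $K$ recalled before the statement.

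To keep the argument self-contained, I would sketch why this holds using the Markov chain viewpoint from the proof of Theorem~\ref{theoremDensityPositiveBernoulli}. Run the chain on $M$ with transitions $m\mapsto m\varphi(a)$ and weights $\overline{\mu}(a)$; since $\overline{\mu}$ is positive, every letter is used, and the recurrent states form the minimal right ideals. Symmetrically, left multiplication shows that the mass carried by $Mm$ is $\nu(Mm)$. The positivity of $\overline{\mu}$ and the action of $G=mM\cap Mm$ on the $\mathcal{H}$-classes then give a uniform distribution on each group $R\cap L$, which produces the factor $1/d$. This last step is the main obstacle, and I would rely on the cited reference rather than reprove it.

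For the aperiodic case: if $M$ is aperiodic, every group in $M$ is trivial, so $d=1$. Every language recognized by $\varphi$ is aperiodic, and the final clause of Theorem~\ref{theoremDensityPositiveBernoulli} gives $\lim_n\mu_n(\varphi^{-1}(m)\cap A^n)=\delta_{\overline{\mu}}(\varphi^{-1}(m))$. Hence $\delta_\mu(L)$ exists in the strong sense, using finite additivity over $m\in N$ when $L=\varphi^{-1}(N)$.
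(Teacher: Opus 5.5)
Your proposal is correct and follows the paper's route. Theorem~\ref{theoremDensityPositiveBernoulli} identifies $\delta_\mu$ with $\delta_{\overline{\mu}}$ on every language recognized by $\varphi$, and its last clause gives the strong-sense claim in the aperiodic case; Sch\"utzenberger's theorem for the single positive Bernoulli measure $\overline{\mu}$ then yields items 1 and 2. Your Markov-chain sketch is optional (its ``symmetric'' sentence is circular as written), but you defer that step to \cite{BerstelPerrinReutenauer2009} exactly as the paper does, so nothing is missing.
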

\begin{proof}
By Theorem~\ref{theoremDensityPositiveBernoulli}, we have $\delta_\mu=\delta_{\overline{\mu}}$.
Since the above result holds for a positive Bernoulli measure, the statement follows.
\end{proof}
\begin{example}
Let us consider again $L=A^*aA^*$ as in Examples \ref{exampleLynch} and \ref{exampleIdeal}.
The case $n^{-1}\ll p_n$ occurs if $\overline{\mu}$ is positive. The value $\delta_\mu(L)=1$
is consistent with Corollary \ref{corollaryEquidistribution}. 
Indeed, in this case the monoid $M$ has two elements $0,1$ with $\varphi(a)=0$ and $\varphi(b)=1$.
We have $K=\{0\}$ and $\delta_\mu(L)=\nu(K)=1$.

In the cases $\mu_n(a)\ll n^{-1}$ or $\mu_n(a)\sim cn^{-1}$, we have $\nu(K)<1$.
\end{example}
Theorem~\ref{theoremDensityPositiveBernoulli}
can be generalized to the case
of a sequence of Markov measures
converging to a positive Markov measure.
Indeed, let $\nu$ be a  
 Markov measure defined by an initial
vector $v$ and a stochastic
$A\times A$-matrix $M$. 
Then, we replace the matrix $P(\nu)$ of Equation~\ref{eqMarkovChain}
by the $(Q\times A)\times (Q\times A)$ matrix
\[P(\nu)_{(p,a),(p\cdot a,b)}=M_{a,b}.\]

%%%%%%%%%%%%%%%%%%%%

\section{Convergence in the strong sense}\label{sectionStrong}

\begin{theorem}\label{theoremMain}
Let $\mu=(\mu_n)_{n\ge 0}$ be a sequence of invariant probability measures converging in the strong sense
to an invariant probability measure $\overline{\mu}$. Then, for every rational language $L$,
the sequential density of $L$ exists and 
$\delta_\mu(L)=\delta_{\overline{\mu}}(L)$. If $L$ is aperiodic, the sequential density exists in the strong sense.
\end{theorem}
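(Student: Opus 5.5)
Since $\mu_n$ converges to $\overline{\mu}$ in the strong sense, we can compare the averages defining $\delta_\mu(L)$ and $\delta_{\overline{\mu}}(L)$ term by term and reduce everything to the existence of the ordinary density with respect to the single invariant measure $\overline{\mu}$. That existence is the main result of \cite{BertheGouletOuelletPerrin2025}: for every invariant probability measure $\overline{\mu}$ and every rational language $L$, the density $\delta_{\overline{\mu}}(L)=\lim_n\frac1n\sum_{i<n}\overline{\mu}(L\cap A^i)$ exists, and the plain limit $\lim_n\overline{\mu}(L\cap A^n)$ exists when $L$ is aperiodic. We take this as given.

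First I would fix $i$ and write $U_i=\bigcup_{w\in L\cap A^i}[w]$. This is a finite disjoint union of cylinders, since distinct words of the same length $i$ give disjoint cylinders. So $U_i$ is a Borel set with $\mu_n(U_i)=\mu_n(L\cap A^i)$ for every $n$, and likewise for $\overline{\mu}$. Here it matters that strong convergence is uniform over \emph{all} Borel sets, not only over cylinders. Given $\varepsilon>0$, there is $N$ such that for every $i\ge N$,
\[
\bigl|\mu_i(L\cap A^i)-\overline{\mu}(L\cap A^i)\bigr|=\bigl|\mu_i(U_i)-\overline{\mu}(U_i)\bigr|\le\varepsilon .
\]
Under weak-$*$ convergence alone this fails: $U_i$ changes with $i$, and the number of cylinders in it grows. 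This is why the diagonal quantity $\mu_i(L\cap A^i)$ is controlled here but not in Example~\ref{exampleLynch}.

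Next I would average. For $n>N$, the bound above gives
\[
\Bigl|\frac1n\sum_{i=0}^{n-1}\mu_i(L\cap A^i)-\frac1n\sum_{i=0}^{n-1}\overline{\mu}(L\cap A^i)\Bigr|\le \frac{2N}{n}+\varepsilon ,
\]
because each of the first $N$ terms differs by at most $2$ (all terms lie in $[0,1]$). Letting $n\to\infty$ and then $\varepsilon\to0$, the Ces\`aro means for $\mu$ and for $\overline{\mu}$ have the same limit points. Since the second mean converges to $\delta_{\overline{\mu}}(L)$ by \cite{BertheGouletOuelletPerrin2025}, the first converges too, and $\delta_\mu(L)=\delta_{\overline{\mu}}(L)$.

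For aperiodic $L$, the same inequality without averaging gives $\limsup_n|\mu_n(L\cap A^n)-\overline{\mu}(L\cap A^n)|\le\varepsilon$ for every $\varepsilon>0$. The limit $\lim_n\overline{\mu}(L\cap A^n)$ exists by the aperiodic case of \cite{BertheGouletOuelletPerrin2025}, so $\lim_n\mu_n(L\cap A^n)$ exists as well. The argument rests entirely on that cited existence result, in particular on its strong form for aperiodic languages. If that reference only gives Ces\`aro existence for general invariant measures, the aperiodic statement has to be taken from whatever form of the strong result it does provide. Apart from this dependence, the only subtle point is using the uniformity of strong convergence along the diagonal $\mu_i(U_i)$.
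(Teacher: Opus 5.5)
Your proof is correct. It uses the same two ingredients as the paper: Theorem~\ref{theoremBGOP} applied to $\overline{\mu}$, and the fact that strong convergence is uniform over all Borel sets.

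The organization differs. The paper introduces the double sequences $u_{m,n}=\frac{1}{m}\sum_{i=0}^{m-1}\mu_n(L\cap A^i)$ and, in the aperiodic case, $v_{m,n}=\mu_n(L\cap A^m)$. It then invokes the Moore--Osgood type Lemma~\ref{lemmaDoubleSequences}, with strong convergence supplying the uniformity hypothesis (ii). You instead make a direct $\varepsilon$--$N$ comparison, term by term, along the diagonal.

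Your route is shorter and follows the definition more closely. The Ces\`aro mean defining $\delta_\mu(L)$ pairs $\mu_i$ with words of length $i$. By contrast, the diagonal $u_{n,n}=\frac{1}{n}\sum_{i=0}^{n-1}\mu_n(L\cap A^i)$ of the paper's double sequence uses the single measure $\mu_n$ for all lengths. Getting from that double limit to $\delta_\mu(L)$ needs exactly your bound $|\mu_i(L\cap A^i)-\overline{\mu}(L\cap A^i)|\le\varepsilon$ for $i\ge N$.

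In the aperiodic case the two arguments essentially coincide, since $v_{n,n}=\mu_n(L\cap A^n)$ is the relevant diagonal. There the double-sequence formulation also records joint convergence of $\mu_n(L\cap A^m)$ as $m,n\to\infty$ independently. That too follows at once from your estimate, which holds for every length $m$ once $n\ge N$.

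Two small remarks. Neither argument uses invariance of the $\mu_n$ themselves, only of $\overline{\mu}$. Also, two numbers in $[0,1]$ differ by at most $1$, not $2$, though your bound $2N/n+\varepsilon$ remains valid.
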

We use the following result, established in \cite{BertheGouletOuelletPerrin2025}. 
\begin{theorem}\label{theoremBGOP}
Let $\mu$ be an invariant probability measure. Every rational language $L$ has a density
with respect to $\mu$. If $L$ is aperiodic, the density exists in the strong sense.
\end{theorem}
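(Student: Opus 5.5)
The plan is to reduce first to the ergodic case and then to use a synchronization argument in the finite monoid recognizing $L$. Let $\varphi\colon A^*\to M$ be a morphism onto a finite monoid with $L=\varphi^{-1}(N)$. It suffices to treat $N=\{m\}$ for a single $m$, by finite additivity. Write $\mu=\int \mu_\omega\,dP(\omega)$ for the ergodic decomposition. Since $\mu(L\cap A^n)=\int\mu_\omega(L\cap A^n)\,dP(\omega)$ and all quantities lie in $[0,1]$, dominated convergence shows that existence of the (Ces\`aro or strong) limit for each ergodic $\mu_\omega$ gives the same for $\mu$. So we assume $\mu$ is ergodic.

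Let $S_\mu$ be the set of words in the support of $\mu$, and let $M_\mu=\varphi(S_\mu)$. This is a submonoid of $M$, since $S_\mu$ is factorial and, by ergodicity, any two words of positive measure can be joined inside the support. Let $K_\mu$ be its minimal ideal and choose $u\in S_\mu$ with $\varphi(u)\in K_\mu$. By invariance, $\mu(L\cap A^n)=\mu(\{x\mid \varphi(x_{[-n,0)})=m\})$, so we look at $x$ from the left, going backwards. By ergodicity, $\mu$-almost every $x$ contains infinitely many occurrences of $u$ to the left of $0$. Let $\tau(x)$ be the position of the occurrence of $u$ in $x_{(-\infty,0)}$ closest to $0$. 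Then, for $n\ge\tau(x)+|u|$,
\[\varphi(x_{[-n,0)})=\varphi(x_{[-n,-\tau(x)-|u|)})\,\varphi(u)\,\varphi(x_{[-\tau(x),0)}),\]
and the whole product lies in $K_\mu$. Its $\mathcal{R}$-class is fixed by the left factor and its $\mathcal{L}$-class is fixed by $\varphi(u)\varphi(x_{[-\tau,0)})$, a quantity that is determined once $n$ is large.

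More precisely, using the Rees--Suschkewitsch structure of $K_\mu$, we write $\varphi(x_{[-n,0)})=(r_n(x),g_n(x),l_n(x))$ with $r_n$ in a finite index set $I$, $l_n$ in a finite index set $\Lambda$, and $g_n$ in the structure group $G$. The $\Lambda$-coordinate $l_n(x)$ is eventually constant, equal to a measurable $l(x)$.

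The $I$-coordinate requires using the left end. Repeating the argument at the left end $-n$, which amounts to shifting by $S^{-n}$ and using invariance, the joint law of $(r_n,l_n)$ converges. The reason is that $r_n$ depends, up to an error of probability $o(1)$, only on the window of $x$ just to the right of $-n$. The right end $l_n$ depends only on the window just left of $0$. Mixing-in-average, in the form \eqref{eqErgodic}, decorrelates these two windows under Ces\`aro averaging.

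If $M$ is aperiodic then $G$ is trivial. In that case $\varphi(x_{[-n,0)})$ is determined by $(r_n,l_n)$ for all but a set of measure $o(1)$, which gives the limit of $\mu(L\cap A^n)$ in the strong sense. Elements of $M_\mu\setminus K_\mu$ get probability $o(1)$, because the probability that $x_{[-n,0)}$ avoids $u$ tends to $0$.

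For general $M$ the group coordinate $g_n$ remains. Conditioned on the outer coordinates, $g_n$ evolves like a random walk on the finite group $G$ driven by the successive return blocks between occurrences of $u$. For this walk one proves that the Ces\`aro averages of the distribution of $g_n$ converge; they are uniform on cosets of the subgroup generated by the support of the walk. The Ces\`aro averaging is what removes the periodicity.

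I expect this group step to be the main obstacle. The increments are not independent when $\mu$ is merely ergodic rather than Bernoulli or Markov. My first approach would be the skew product $T(x,g)=(Sx,g\,c(x))$ on $X\times G$ with a cocycle $c$ read off from the return blocks. I would show that every weak-$*$ limit point of the Ces\`aro averages $\frac1n\sum_{i<n}T^i_*(\mu\times\delta_e)$ is $T$-invariant with $X$-marginal $\mu$. Then I would use the finiteness of $G$ (an ergodic decomposition of the skew product into finitely many $G$-translates) to show that this limit is unique. Uniqueness of the limit gives existence of $\delta_\mu(L)$.
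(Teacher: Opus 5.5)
The paper gives no proof of this statement; it imports it from \cite{BertheGouletOuelletPerrin2025}. Judged on its own, your argument has a gap at the aperiodic, strong-sense step, and that gap cannot be closed. Ergodicity gives only \eqref{eqErgodic}, which decorrelates the boundary coordinates $r_n$ and $l_n$ in Ces\`aro mean. With $G$ trivial this yields convergence of $\frac1n\sum_{i<n}\mu(L\cap A^i)$, and ``which gives the limit of $\mu(L\cap A^n)$ in the strong sense'' does not follow. In fact, as quoted here, the second assertion is false for invariant, even ergodic, measures. Let $x\in A^\Z$ with $x_{2k}=a$, $x_{2k+1}=b$, and let $\mu=\frac12(\delta_x+\delta_{Sx})$. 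The language $L=aA^*\cap A^*a$ is recognized by the morphism sending a nonempty word to the pair (first letter, last letter), onto the monoid $\{1\}\cup A\times A$ with $(p,q)(r,s)=(p,s)$. This monoid is aperiodic since $m^2=m$. Yet $\mu(L\cap A^n)$ equals $\frac12$ for odd $n$ and $0$ for even $n\ge 2$. In your notation, $r_n$ and $l_n$ are the first and last letters of $x_{[-n,0)}$; their joint law oscillates with period $2$, and only its Ces\`aro mean equals $\mu(a)^2$. The strong-sense claim does hold if $\mu$ (or almost every ergodic component) is mixing. In that case \eqref{eqMixing} replaces \eqref{eqErgodic} in your decorrelation step, and your aperiodic argument goes through with the repairs below.

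For the Ces\`aro half your strategy is sound, but three points need fixing. First, $\varphi(S_\mu)$ is not a submonoid: joining $u$ and $v$ inside the support produces $\varphi(uwv)$, not $\varphi(u)\varphi(v)$. For the measure above and the syntactic morphism of $A^*aaA^*$, $\varphi(a)\in\varphi(S_\mu)$, but $\varphi(a)^2$ is the zero, which is the image of no word of the support. Replace $K_\mu$ by the $\mathcal{J}$-class $J$ that is minimal among those meeting $\varphi(S_\mu)$. It is unique because the support is irreducible. It is regular: if $\varphi(u)\in J$ and $uwu\in S_\mu$, then $\varphi(uw)$ and $\varphi(uw)\varphi(u)$ lie in $J$, so by the Miller--Clifford theorem $H_{\varphi(uw)}$ is a group. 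Work with the Rees coordinates of $J^0$, and replace $u$ by $uw$, so that the return-block increments $\varphi(x_{[p,p'+|u|)})\varphi(u)^{-1}$ lie in the group $H_{\varphi(u)}$. Second, drop the claim that Ces\`aro limits of $g_n$ are uniform on cosets of the subgroup generated by the increments. It fails for dependent stationary increments: a coboundary $h_k=f_kf_{k+1}^{-1}$, with $f_k$ i.i.d.\ and non-uniform, has partial products $f_1f_{k+1}^{-1}$ whose limit law is not uniform. Third, the skew-product step is easier than you expect. The map $T(y,g)=(Sy,g\,c(y))$ preserves $\mu\times m_G$, where $m_G$ is uniform on $G$. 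Writing $\mu|_F$ for the restriction of $\mu$ to $F$, one has $\mu|_F\times\delta_e\le|G|\,(\mu\times m_G)$. The mean ergodic theorem therefore gives convergence of $\frac1n\sum_{i<n}T^i_*(\mu|_F\times\delta_e)$ in total variation. Take $F$ to fix the left boundary block and evaluate on sets determined by the right boundary block. This gives the joint Ces\`aro law of (left boundary, group coordinate, right boundary) in one step, with no weak-$*$ uniqueness argument and no decomposition of the skew product.
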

We also use the following elementary result from real analysis (see \cite[Section 8.20]{Apostol1974}).
We say that a sequence $(u_{m,n})$ of real numbers converges to $a$ if for every $\varepsilon>0$,
there is $N \in\N$ such that $|u_{m,n}-a|\le\varepsilon$ whenever $m,n\ge N$. 
If the sequence converges to $a$,
then the limits $\lim_{m\to\infty}\lim_{n\to\infty}u_{m,n}$ and $\lim_{n\to\infty}\lim_{m\to\infty}u_{m,n}$
also converge to $a$. 
The converse is also true with the additional hypothesis of uniform convergence.

\begin{lemma}\label{lemmaDoubleSequences}
Let $(u_{m,n})_{m,n\ge 0}$ be a sequence of real numbers. If
\begin{enumerate}
\item[\rm(i)] $\lim_{m\to\infty}\lim_{n\to\infty}u_{m,n}=a$,
\item[\rm(ii)] $\lim_{n\to\infty}u_{m,n}$ exists uniformly in $m$,
\end{enumerate}
then $(u_{m,n})$ converges to $a$.
\end{lemma}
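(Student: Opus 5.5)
This is the classical Moore–Osgood argument: uniform convergence lets us interchange the two limits. Write $f(m)=\lim_{n\to\infty}u_{m,n}$, which exists for every $m$ by (ii). Hypothesis (i) then reads $\lim_{m\to\infty}f(m)=a$.

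Fix $\varepsilon>0$. By (ii), read as uniform convergence of $u_{m,\cdot}$ to $f(m)$, there is $N_1$ such that
\[|u_{m,n}-f(m)|\le\varepsilon/2\quad\text{for all } m \text{ and all } n\ge N_1.\]
By (i), there is $N_2$ such that $|f(m)-a|\le\varepsilon/2$ for all $m\ge N_2$. Set $N=\max(N_1,N_2)$. For $m,n\ge N$, the triangle inequality gives
\[|u_{m,n}-a|\le|u_{m,n}-f(m)|+|f(m)-a|\le\varepsilon.\]
This is exactly convergence of the double sequence to $a$ in the sense defined just before the lemma.

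The argument needs no step beyond this. The only point requiring care is interpreting (ii) correctly: the same threshold $N_1$ must work simultaneously for every $m$, and that uniformity is precisely what allows the two limits to be combined. The bound does not even use $m\ge N_1$, only $n\ge N_1$, so taking the maximum of $N_1$ and $N_2$ suffices.
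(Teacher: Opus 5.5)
Your proof is correct. Defining $f(m)=\lim_{n\to\infty}u_{m,n}$ and combining the threshold from (ii), which is uniform in $m$, with the threshold from (i) via the triangle inequality is the standard Moore--Osgood argument. The paper gives no proof of its own and simply refers to Apostol (Section 8.20), so your argument supplies exactly the classical proof that this reference points to.
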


\begin{proofof}{of Theorem~\ref{theoremMain}}
Set 
\[u_{m,n}=\frac{1}{m}\sum_{i=0}^{m-1}\mu_n(L\cap A^i).\]
 We have by Theorem \ref{theoremBGOP},
\[\lim_{m\to\infty}\lim_{n\to\infty}u_{m,n}=\lim_{m\to\infty} \frac{1}{m}\sum_{i=0}^{m-1}\overline{\mu}(L\cap A^i)=
\delta_{\overline{\mu}}(L).
\]
On the other hand, 
\[\lim_{n\to\infty}u_{m,n}=\lim_{n\to\infty}\frac{1}{m}\sum_{i=0}^{m-1}\mu_n(L\cap A^i)=\frac{1}{m}\sum_{i=0}^{m-1}\overline{\mu}(L\cap A^i),
\]
 uniformly in $m$ since the sequence $(\mu_n)$ converges strongly.
Therefore, by Lemma \ref{lemmaDoubleSequences}, the sequence $(u_{m,n})$ converges to $\delta_{\overline{\mu}}(L)$.

If $L$ is aperiodic, set $v_{m,n}=\mu_n(L\cap A^m)$. By Theorem~\ref{theoremBGOP}, we have
\[\lim_{m\to\infty}\lim_{n\to\infty} v_{m,n}=\lim_{m\to\infty}\overline{\mu}(L\cap A^m)=\delta_{\overline{\mu}}(L),\]
and 
\[\lim_{n\to\infty}v_{m,n}=\lim_{m\to\infty}\overline{\mu}(L\cap A^m)\]
uniformly in $m$. By Lemma \ref{lemmaDoubleSequences}, the sequence $v_{m,n}$ converges
to $\delta_{\overline{\mu}}(L)$.
\end{proofof}

Theorem \ref{theoremMain} is, however,
of limited application. 
In fact, a sequence of ergodic measures never
converges in the strong sense unless
it is eventually constant. 
Indeed, let $\mu_n$ be a sequence of ergodic measures
converging in the strong sense to $\overline{\mu}$. If $\mu_n\ne\overline{\mu}$
for all $n\ge 0$,
the measures $\mu_n$ and $\mu$ being mutually singular, the sequence $(\mu_n)$
cannot converge in the strong sense to $\overline{\mu}$.

%%%%%%%%%%%%%%%%%%%%%%%%%
\section{Densities in more general cases}\label{sectionWeak}

We give two examples of interest.
We start with an example for which
the sequence converges, but not to
a positive measure.

Consider the shift of finite type on $A=\{a,b,c\}$ with set of forbidden blocks
\[W_n=\{aA^{i}a: i \leq n \} \]
and call this shift $X_n$ for each $n\ge 2$.
Observe that $X_n$ is the elements of $A^{\Z}$ in which occurrences of $a$ are at least $n$ digits apart.
We note also that $X_n \supsetneq X_{n+1}$ for each $n$, and each is a shift of finite type.
Therefore, we may define $\mu_n$ as the measure of maximal entropy on the support of $X_n$, and it is clear that $\mu_n$ is a Markov measure.

The shift $X=\cap_{n\in \N} X_n$ is
constituted of the sequences
having at most one occurrence
of the letter $a$. 
We consider the rational language $L=\{b,c\}^*a\{b,c\}^*$.
The set  $L\cap A^n$ consists of the $n$ sequences of
length $n$ over $A$ having only one $a$. Therefore, $\mu_n(L\cap A^n)$
tends to $0$.

The sequence $(\mu_n)_{n \in \N}$ converges to the measure of maximal entropy on $\{b,c\}^{\Z} \bigcup \{b,c\}^{-\N}a\{b,c\}^{\N}$. Call this measure $\bar{\mu}$.
It is clear that $\bar{\mu}$ is
the Bernoulli measure given by $\bar{\mu}(a)=0$,
$\bar{\mu}(b)=
\bar{\mu}(c)=\frac{1}{2}$.
We see that $\delta_{\overline{\mu}}(L)=0$.
The sequential density of $L$
is therefore equal to its density with respect to $\bar{\mu}$.
On the other hand, if we consider the
rational language $L'=\cL(X)=L\cup\{b,c\}^*$,
then $0=\delta_\mu(L')\ne\delta_{\overline{\mu}}(L')=1$.\\

We give a second example showing that Lynch's theorem might fail to hold
for  probability measures that are not Markov  measures.

For $n\ge 2$, let $X_n$ be  the shift of finite type on $A=\{a,b,c\}$ with the following set of forbidden blocks:
\[W_n=\begin{cases}aA^{n-2}a&\mbox{if $\lfloor \log_2(n)\rfloor$ is even,}\\
aA^{n-2}b&\mbox{otherwise.}\end{cases}\]
The shift $X_n$ is represented for $n=2$ in Figure~\ref{figureShiftXn}.
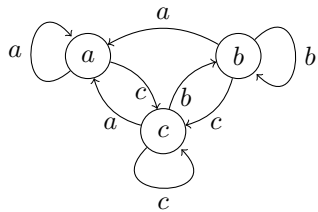
\begin{figure}[hbt]
\centering
\tikzset{node/.style={circle,draw,minimum size=.6cm,inner sep=0pt}}
    \tikzset{title/.style={minimum width=.4cm,minimum height=.4cm}}
    \tikzstyle{every loop}=[->,shorten >=1pt,looseness=6]
    \tikzstyle{loop left}=[in=130,out=220,loop]
    \tikzstyle{loop right}=[in=-45,out=45,loop]
    \tikzstyle{loop below}=[in=-45,out=-135,loop]
\begin{tikzpicture}
%n=2
\node[node](a)at(0,0){$a$};
\node[node](b)at(2,0){$b$};
\node[node](c)at(1,-1){$c$};

\draw[left,loop left,->](a)edge node{$a$}(a);
\draw[right,loop right,->](b)edge node{$b$}(b);
\draw[below,loop below,->](c)edge node{$c$}(c);
\draw[above,bend right,->](b)edge node{$a$}(a);
\draw[below,bend left,->](c)edge node{$a$}(a);
\draw[below,bend left,->](a)edge node{$c$}(c);
\draw[below,bend left,->](b)edge node{$c$}(c);
\draw[below,bend left,->](c)edge node{$b$}(b);

\end{tikzpicture}
\caption{The shift $X_2$.}\label{figureShiftXn}
\end{figure}

Each shift $X_n$ is irreducible. 
Indeed, $X_n$ is an edge shift on the graph $\Gamma_n$ with $A^{n-1}$ as the set of vertices, where the edges are the
words of length $n$ not in $W_n$. 
The vertex $a^{n-1}$ is connected to $c^{n-1}$, which is connected to all words of length $n-1$.

Let $\mu_n$ be the measure of maximal entropy on $X_n$. Then $(\mu_n)$ converges to the uniform Bernoulli
distribution on the full shift $\{a,b,c\}^\Z$.
The measure $\mu_2$ is the measure computed in Example~\ref{exampleMeasureMaxEntropy}.

We claim that the sequence $(\mu_n)$ is a sequence of ergodic measures converging to the
uniform Bernoulli measure and that the sequential density of the language $L=aA^*\cap A^*a$ does
not exist.

Indeed, each $\mu_n$ is an irreducible stationary Markov chain, which implies that it is ergodic.
It converges to the uniform Bernoulli measure because the sequence $(X_n)$ converges to
the full shift $A^\Z$. Next, we have
\[\frac{1}{n}\sum_{i=0}^{n-1}\mu_i(L\cap A^i)<\frac{1}{2}
\]
if $n=2^{2k+1}$ and 
\[\frac{1}{n}\sum_{i=0}^{n-1}\mu_i(L\cap A^i)>\frac{1}{2}+\frac{1}{8}=\frac{5}{8}\]
otherwise. Thus the limit does not exist.

\section{Combinatorial density}\label{sectionCombinatorial}
Let $X$ be a shift space on the alphabet $A$. The \emph{combinatorial density}
of a language $L\subset A^*$ is the limit
\[\delta_X(L)=\lim_{n\to\infty}\frac{1}{n}\sum_{i=0}^{n-1}\frac{\Card(L\cap\cL_i(X))}{\Card(\cL_i(X)}\]
provided the limit exists.

The combinatorial density is a particular case of sequential density. 
For each $n\ge 0$, let $\mu_n$ be
a probability measure on $A^\Z$ such that its $n^{th}$ marginal distribution for $w\in A^n$ is the following:
\[\mu_n(w)=\begin{cases}\frac{1}{\Card(\cL_n(X)}&\mbox{if $w\in\cL_n(X)$},\\0&\mbox{otherwise.}\end{cases}\]
Then, 
\[\mu_n(L)=\frac{\Card(L\cap\cL_n(X)}{\Card(\cL_n(X)}.\]
Therefore, the combinatorial density of $L$ is equal to its sequential density with respect to
the sequence $(\mu_n)$.

 The following shows that, when they exist, the combinatorial density
of  right ideals is its density with respect to some probability measure on $X$.
\begin{proposition}
Let $X$ be a shift space such that $\delta_X(uA^*)$ exists for each $u\in\cL(X)$. There is a probability measure $\mu$ on $X$ such that
for every $u\in\cL(X)$ one has
\[\delta_X(uA^*)=\mu(u).\]
\end{proposition}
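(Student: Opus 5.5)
The plan is to define $\mu$ on cylinders by $\mu(u)=\delta_X(uA^*)$ for $u\in\cL(X)$, and $\mu(u)=0$ for $u\notin\cL(X)$. We then check the Kolmogorov consistency conditions, so that $\mu$ extends to a probability measure on $A^\Z$ supported by $X$. Since cylinders $[u]$ only fix coordinates $0,\dots,|u|-1$, we will in fact build a measure on $A^\N$ and transport it to $A^\Z$. The simplest way is to define it on one-sided cylinders and take the product with any fixed probability on the negative coordinates. Better is to restrict those negative coordinates to be compatible with $X$, so that the support lies in $X$; any measurable choice works, since the statement only concerns the values $\mu(u)$. If needed we can pass to a weak-$*$ limit to get support in the closed set $X$.

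The key computations are as follows.
\begin{enumerate}
\item Normalization: $\mu(\varepsilon)=\delta_X(A^*)=1$, since $\Card(A^*\cap\cL_i(X))/\Card(\cL_i(X))=1$ for every $i$.
\item Nonnegativity is clear.
\item Additivity: $\mu(u)=\sum_{a\in A}\mu(ua)$.
\end{enumerate}
For additivity, fix $u$ with $|u|=k$. For every $i>k$, the words of $\cL_i(X)$ beginning with $u$ are the disjoint union over $a\in A$ of those beginning with $ua$. Hence
\[
\frac{\Card(uA^*\cap\cL_i(X))}{\Card\cL_i(X)}=\sum_{a\in A}\frac{\Card(uaA^*\cap\cL_i(X))}{\Card\cL_i(X)}\qquad(i>k).
\]
Only for the finitely many $i\le k$ can the two sides differ, and those terms have bounded contribution of order $O(1/n)$ to the Ces\`aro averages. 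Taking the average over $i<n$ and letting $n\to\infty$, using the existence of each $\delta_X(uaA^*)$ (for $ua\in\cL(X)$; otherwise the term is identically $0$), gives $\delta_X(uA^*)=\sum_a\delta_X(uaA^*)$. This is finite additivity of the sequential density applied to the disjoint union $uA^{\ge 1}=\bigcup_a uaA^*$, up to a finite set of density zero.

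By Kolmogorov's extension theorem (equivalently, Carath\'eodory on the algebra of cylinder sets, which is compact so finite additivity suffices), these consistent values define a probability measure on $A^\N$. Since $\mu(u)=0$ for $u\notin\cL(X)$, the measure is concentrated on sequences all of whose factors starting at position $0$ lie in $\cL(X)$. These are exactly the right-infinite sequences that are restrictions of points of $X$, as $\cL(X)$ is factorial and extendable by compactness.

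The main obstacle is this last bookkeeping of placing the measure on $X\subset A^\Z$ rather than on one-sided sequences. The measure is not claimed to be invariant, so we cannot use the natural extension. I would handle it by choosing, via compactness, for each one-sided sequence $y$ in the support some $x\in X$ with $x_{[0,\infty)}=y$, using a measurable selection. It is simpler still to approximate: take measures $\nu_k$ on $X$ that agree with $\mu$ on all cylinders of length at most $k$, by putting mass $\mu(w)$ on some point of $X\cap[w]$ for each $w\in\cL_k(X)$. Any weak-$*$ limit point of $(\nu_k)$ is then a probability measure on $X$, since $X$ is closed, with $\nu(u)=\mu(u)=\delta_X(uA^*)$, because cylinders are clopen.
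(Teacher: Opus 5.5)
Your proof is correct and follows the paper's route. The key step in both is the additivity $\delta_X(uA^*)=\sum_{a\in A}\delta_X(uaA^*)$, obtained by exchanging the finite sum over $a$ with the Ces\`aro limit, combined with $\delta_X(A^*)=1$, after which the measure is produced by an extension argument. You are more careful than the paper in two places. First, you handle the boundary index $i=|u|$, which the paper ignores. Second, the paper simply asserts a \emph{unique} measure on $X$, but uniqueness fails in general, since $[u]$ only constrains nonnegative coordinates and $\mu$ need not be invariant. Your atomic measures $\nu_k$ with a weak-$*$ limit point on the compact set $X$ settle existence cleanly, so the earlier product-measure suggestion (which would not be supported on $X$) and the Kolmogorov step can simply be dropped.
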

\begin{proof}
One has for every $u\in A^*$
\begin{eqnarray*}
\sum_{a\in A}\delta_X(uaA^*)&=&\sum_{a\in A}\lim_{n\to\infty}\frac{1}{n}\sum_{i=0}^{n-1}\frac{\Card(uaA^*\cap \cL_i(X))}{\Card(\cL_i(X))}\\
&=&\lim_{n\to\infty}\frac{1}{n}\sum_{i=0}^{n-1}\frac{\sum_{a\in A}\Card(uaA^*\cap \cL_i(X))}{\Card(\cL_i(X))}\\
&=&\lim_{n\to\infty}\frac{1}{n}\sum_{i=0}^{n-1}\frac{\Card(uA^*\cap A^i)}{\Card(\cL_i(X))}\\
&=&\delta_X(uA^*).
\end{eqnarray*}
Since $\delta_X(A^*)=1$, this shows that there is a unique Borel probability measure on $X$
such that $\mu([u])=\delta_X(uA^*)$ for every $u\in\cL(X)$.
\end{proof}
\begin{proposition}\label{propositionDensitySturm}
Let $X$ be a Sturmian shift on $A=\{0,1\}$. For every $u\in\cL(X)$, one has
\[\delta_X(uA^*)=\mu(u),\]
where $\mu$ is  the (unique) invariant probability measure on $X$.
\end{proposition}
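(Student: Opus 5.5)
The plan is to prove the stronger statement that $\Card(uA^*\cap\cL_i(X))/\Card(\cL_i(X))$ converges to $\mu(u)$ as $i\to\infty$. Convergence of the Cesàro averages, and hence $\delta_X(uA^*)=\mu(u)$, then follows at once. I will use the standard geometric description of a Sturmian shift as the coding of an irrational rotation.

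\emph{Setting up the coding.} Let $\alpha\in(0,1)$ be the irrational slope of $X$. Let $R\colon t\mapsto t+\alpha \bmod 1$ act on the circle $\mathbb{T}=[0,1)$, with the partition $I_0=[0,1-\alpha)$, $I_1=[1-\alpha,1)$. Then $\cL(X)$ is the set of codings $w=w_0\cdots w_{i-1}$ with $R^k(t)\in I_{w_k}$ for $0\le k<i$, as $t$ ranges over $\mathbb{T}$. The usual endpoint ambiguity (lower versus upper mechanical words) does not change the language.

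For each $w\in\cL_i(X)$, the set $I_w=\bigcap_{k<i}R^{-k}I_{w_k}$ is a nonempty half-open interval. These intervals are exactly the $i+1$ arcs cut out of $\mathbb{T}$ by the points $-k\alpha \bmod 1$ for $0\le k\le i$. This recovers the complexity $\Card(\cL_i(X))=i+1$. The unique invariant measure $\mu$ is the image of Lebesgue measure $\lambda$, so $\mu(u)=\lambda(I_u)$.

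\emph{Counting extensions.} Fix $u\in\cL(X)$ with $|u|=\ell$, and let $i\ge\ell$. The words of $\cL_i(X)$ with prefix $u$ are exactly those $w$ with $I_w\subset I_u$. This is because the partition at level $i$ refines the partition at level $\ell$, and $I_w\cap I_u\neq\emptyset$ forces $I_w\subset I_u$.

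The endpoints of $I_u$ lie among the points $-k\alpha$ with $k\le\ell$. Hence the number of level-$i$ arcs inside $I_u$ equals $1$ plus the number of points $-k\alpha$, $0\le k\le i$, lying in the interior of $I_u$. This gives
\[
\Card(uA^*\cap\cL_i(X))=\Card\{k\le i : -k\alpha\in I_u\}+O(\ell),
\]
where the $O(\ell)$ term is uniform in $i$.

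\emph{Equidistribution.} By Weyl's equidistribution theorem for the irrational rotation, $\frac{1}{i+1}\Card\{k\le i: -k\alpha\in I_u\}\to\lambda(I_u)=\mu(u)$. Dividing the count above by $\Card(\cL_i(X))=i+1$ therefore gives
\[
\frac{\Card(uA^*\cap\cL_i(X))}{\Card(\cL_i(X))}\to\mu(u).
\]
This is the density in the strong sense, and the Cesàro averages have the same limit.

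The main obstacle is the middle step. One must carefully justify that the factors of length $i$ correspond bijectively to the arcs of the partition by $\{-k\alpha\}_{k\le i}$, and that "prefix $u$" means "arc contained in $I_u$", keeping track of endpoint conventions. I would take this correspondence from the standard mechanical-word theory of Sturmian sequences and check the endpoint bookkeeping directly.
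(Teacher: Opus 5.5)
Your proof is correct. It takes a geometric route where the paper's is combinatorial, although the two arguments match step by step.

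The paper works inside the Sturmian language:
\begin{enumerate}
\item On a binary alphabet every right-special factor has exactly two right extensions. So $\Card(uA^*\cap\cL_n(X))-1$ is the number of right-special words of length less than $n$ that begin with $u$.
\item All right-special words are suffixes of the unique left-infinite right-special sequence $x$. So this number is the count $f_n(x,u)$ of occurrences of $u$ in $x_{(-n,0]}$.
\item Unique ergodicity gives $f_n(x,u)/n\to\mu(u)$.
\end{enumerate}
In your picture, passing from level $m$ to level $m+1$ inserts the cut point $-(m+1)\alpha$ into the interior of the arc coding the right-special word of length $m$. Hence your interior cut points of $I_u$ are exactly the paper's right-special words with prefix $u$. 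Likewise, Weyl's theorem for the orbit $(-k\alpha)$ plays the role of unique ergodicity along $x$, which is the coding of that backward orbit.

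The paper's version does not need the Morse--Hedlund representation of Sturmian shifts as rotation codings, nor the endpoint bookkeeping you flag. It also adapts directly to other uniquely ergodic shifts whose right-special factors all lie along one left-infinite word, where no rotation model need exist. Your version gives a fully explicit count, with a classical equidistribution theorem as the only analytic input.

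Your error term in fact vanishes. For $i\ge\ell$, the left endpoint of $I_u$ is some $-k_0\alpha$ with $k_0\le\ell\le i$, and it supplies the $+1$. Therefore $\Card(uA^*\cap\cL_i(X))=\Card\{0\le k\le i : -k\alpha\in I_u\}$ exactly. Both arguments give convergence of the ratio itself, i.e.\ density in the strong sense.
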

\begin{proof}
Let $x\in A^{-\N}$ be the unique right-special sequence in $X^-$. 
 The integer
$\Card(uA^*\cap \cL_n(X))-1$ is equal to the number of right-special words
of length at most $n$ beginning with $u$. Since all right-special words are suffixes of $x$,
it is also equal to the number $f_n(x,u)$
of occurrences of $u$ in $x_{(-n,0]}$. Thus
\[\delta_X(uA^*)=\lim_{n\to\infty}\frac{1+f_n(x,u)}{n+1}=\mu(u).\]
\end{proof}

\subsection{Acknowledgements} The first-named author was supported by Cofund Math In Greater Paris, Marie Sk\l odowska-Curie Actions (H2020-MSCA-COFUND-2020-GA101034255).

\bibliographystyle{plain}
\bibliography{probasSequential.bib}
\end{document}